\documentclass[11pt]{article}
\usepackage{amsmath,amssymb,amsthm, epsfig}
\usepackage{amsfonts}
\usepackage{amsmath}
\usepackage{amssymb}
\usepackage{color}
\usepackage{palatino}
\usepackage[mathscr]{eucal}



\pagestyle{myheadings}
\markboth{\textsc{eduardo v. teixeira}}{\textsc{Hessian continuity}}

\title{\bf Hessian continuity at degenerate points \\ in nonvariational elliptic problems}

\author{ \textsc{Eduardo V. Teixeira} \\ \textit{\footnotesize Universidade Federal do Cear\'a}  \\ \textit{\footnotesize Fortaleza, CE, Brazil} }

\date{}







\def \suchthat {\ \big | \ }

\def \L {\mathfrak{L}}





\newtheorem{theorem}{Theorem}
\newtheorem{lemma}[theorem]{Lemma}

\theoremstyle{definition}

\theoremstyle{remark}

\numberwithin{equation}{section}




\newcommand{\intav}[1]{\mathchoice {\mathop{\vrule width 6pt height 3 pt depth  -2.5pt
\kern -8pt \intop}\nolimits_{\kern -6pt#1}} {\mathop{\vrule width
5pt height 3  pt depth -2.6pt \kern -6pt \intop}\nolimits_{#1}}
{\mathop{\vrule width 5pt height 3 pt depth -2.6pt \kern -6pt
\intop}\nolimits_{#1}} {\mathop{\vrule width 5pt height 3 pt depth
-2.6pt \kern -6pt \intop}\nolimits_{#1}}}




\begin{document}
\maketitle

\begin{abstract}
Established in the 30's, Schauder {\it a priori} estimates are among the most classical and powerful tools in the analysis of problems ruled by 2nd order elliptic PDEs. Since then, a central problem in regularity theory has been to understand Schauder type estimates fashioning particular borderline scenarios. In such context, it has been a common accepted aphorism that the continuity of the Hessian of a solution could never be superior than the continuity of the medium. Notwithstanding, in this article we show that solutions to uniformly elliptic, linear equations with $C^{0,\bar{\epsilon}}$ coefficients are of class $C^{2,\alpha}$, for any $0 < \bar{\epsilon} \ll \alpha < 1$, at Hessian degenerate points, $\mathscr{H}(u):=\{X \suchthat D^2u(X) = 0\}$. In fact we develop a more general regularity result at such Hessian degenerate points, featuring into the theory of fully nonlinear equations. Insofar as the optimal modulus of continuity  for the Hessian is concerned, the result of this paper is the first one in the literature to surpass the inborn obstruction from the sharp Schauder {\it a priori} regularity theory.


\noindent \textit{MSC:} 35B65, 35J60.

\medskip

\noindent \textbf{Keywords:} Regularity theory, nonvariational elliptic equations, $C^{2,\alpha}$ estimates.

{
}

\end{abstract}

\section{Introduction}

Among the finest treasures of the theory of elliptic PDEs, Schauder {\it a priori} regularity estimates assure that solutions to a linear, uniformly elliptic equation with $C^{0,\theta}$ data, $0 < \theta < 1$, i.e., functions $u$ satisfying 
\begin{equation}\label{linear eq}
	\L u := a_{ij}(X) D_{ij} u = f(X)
\end{equation}
where 
$$
	0< \lambda \le a_{ij}(X) \le \Lambda, \quad a_{ij}, f \in C^{0,\theta},
$$
are locally of class $C^{2,\theta}$.  Furthermore, there exists a constant $C>0$, depending only upon dimension, ellipticity constants $(\lambda, \Lambda)$, and the $\theta$--H\"older continuity of the data, $\|a_{ij}\|_{C^{0,\theta}}$ and $\|f\|_{C^{0,\theta}}$, such that
\begin{equation}\label{Schauder est}
	\|u\|_{C^{2,\theta}(B_{1/2})} \le C \cdot \|u\|_{L^\infty(B_1)}.
\end{equation}

\par

For comprehensives reference on such a theory, we cite the classical books \cite[Chapter 6]{GT} and also \cite[Chapter 6]{M} . It is also interesting to read \cite{Simon} and references therein. 

\par

The importance of such an estimate to the theory of PDEs and its vast range of applications would hardly be exaggerated. 
Proven in the 30's by the Polish mathematician, Juliusz Schauder,  \cite{S1},  Schauder {\it a priori} estimate \eqref{Schauder est} is sharp in several ways. Estimate \eqref{Schauder est} does not hold true in the case $\theta =0$, i.e.  solutions to elliptic equations \eqref{linear eq} with merely continuous data are not necessarily of class $C^2$. Not even $C^{1,1}$ estimates are in general available equations with continuous sources. Similarly, Schauder {\it a priori} estimate also breaks down at the upper endpoint, $\theta =1$. That is, solutions to elliptic equations \eqref{linear eq}
with Lipchitz data {\it are not} necessarily $C^{2,1}$. Establishing optimal regularity estimates in borderline cases for particular problems involves, in general, new, deep and robust techniques. A classical example we bring up here is the theory of obstacle-type free boundary problems
\begin{equation}\label{OP}
	\L v \approx \chi_{\{v > 0 \}}.
\end{equation}
Classical elliptic regularity theory gives that a solution to \eqref{OP} is of class $C^{1,\alpha}$ for any $\alpha < 1$. Proving that a solution is indeed $C^{1,1}$ involves a deep and much finer analysis, for instance quasi-monotonicity formulae \cite{CJK, Sh}. 
 
\par

Another decisive sharpness aspect of the Schauder regularity theory concerns the best exponent for H\"older continuity of the Hessian of a solution to \eqref{linear eq}. That is, fixed $0< \alpha_0 < 1$, a solution to a uniformly elliptic equation with $C^{0,\alpha_0}$ data is of class $C^{2,\alpha_0}$, but it may fail to belong to $C^{2, \alpha_0 + \delta}$, $\delta > 0$. Up to our knowledge, there has been no significant advances on the issue of surpassing the universal $C^{2, \alpha_0}$ regularity obstruction, at least for certain analytical meaningful points. Of particular interest are the Hessian degenerate points of the solution:
$$
	\mathscr{H}(u) :=  \left ( D^2u \right )^{-1} \{0\}.
$$

\par

The above discussion brings us to the main result of this present work. We shall establish in this manuscript that solutions to a linear elliptic equation
$$
	a_{ij}(X) D_{ij} u =0, 
$$
with $a_{ij} \in C^{0, \bar{\epsilon}}$, $0 < \bar{\epsilon} \ll 1$, is of class $C^{2,1^{-}}$ at any point $Y \in \mathscr{H}(u)$. The symbol $C^{2,1^{-}}$ means $C^{2,\alpha}$ for any $\alpha<1$. This is an unexpected gain of smoothness at Hessian degenerate points, beyond the continuity of the media.  It is furthermore simple to see that if $Y \not \in \mathscr{H}(u)$ such a result cannot hold in general. Thus, our result is, in this perspective, optimal. The very same conclusion is obtained if one assumes only that $a_{ij}$ is Dini continuous. To the best of our knowledge, our result is the first one, in the context of non-divergence elliptic regularity theory, to outmatch the classical  Schauder {\it a priori} estimate, insofar as the continuity of the media is concerned. 

\par

To exemplify the gain of smoothness provided by the result above mentioned, suppose, for the sake of illustration, that $u$ solves a linear equation
$$
	a_{ij}(X) D_{ij} u = 0,
$$
where $a_{ij}$ is uniform elliptic and $a_{ij} \in C^{0,0.1}$. Classical Schauder regularity theory gives that $D^2 u \in C^{0,0.1}$ at any interior point. However, at a degenerate Hessian point $Y \in \mathscr{H}(u)$, in fact $D^2u$ is much smoother, for instance,  $D^2u \in C^{0, 0.999}$, i.e., $D^2u$ is asymptotically Lipschitz continuous. 


\par

The solution designed for the proof of such a result is, in its very nature, nonlinear. Thus, for the sake of completeness, we shall state and proof a more general result, in the context of fully nonlinear elliptic equations. 

\par

Hereafter $Q_1 \subset \mathbb{R}^d$ denotes the unit cube in the $d$-dimensional euclidean space and $\mathrm{Sym}(d)$ stands for the space of  $d\times d$ symmetric matrices. Throughout this paper we shall work under uniform ellipticity condition on the operator $F\colon Q_1 \times \mathrm{Sym}(d) \to \mathbb{R}$, i.e., we assume there exist two positive constants $0 < \lambda \le \Lambda$ such that, for any $M \in \mathrm{Sym}(d)$,  $X \in Q_1$,
\begin{equation}  \tag{H0}\label{H0}
    \lambda \|P\| \le F(X, M+P) - F(X, M) \le \Lambda \|P\|, \quad \forall P \ge 0.
\end{equation}

\par

As to access estimates on the Hessian of a solution, in this article we shall assume that the model, constant coefficient equation has {\it a priori} $C^{2,\alpha_F}$ local estimates, for some $0 <\alpha_F \le 1$. More precisely, we assume
\begin{equation} \tag{H1}\label{H1}
    F(0, D^2h) = 0, \text{ in } Q_1 \quad \text{implies} \quad \|h\|_{C^{2,\alpha_F}(Q_{1/2})} \le \Theta \cdot  \|h\|_{L^\infty(Q_1)},
\end{equation}
for some constant $\Theta >0$.

\par

Since we will deal with variable coefficient equations, in accordance to \cite{C1} (see also \cite{CC}), throughout this article we impose an $L^n$ type of 
$C^{0, \bar{\epsilon}}$ continuity of the coefficients. More precisely,  measuring the oscillation of the coefficients at $0\in B_1 \subset \mathbb{R}^d$, by
$$
     {\beta}_F(X) := \sup\limits_{N \in \mathrm{Sym}(n)}  \dfrac{\left |F(X,N) - F(0,N) \right |}{1+\|N\|},
$$
we shall impose the existence of a constant $C_1 > 0$ such that 
\begin{equation}\tag{H2}\label{H2}
    \intav{Q_r}   |  {\beta}_F(X)  |^n dX \le C_1^n \cdot  r^{n \bar{\epsilon}},
\end{equation}
for some $0< \bar{\epsilon} \ll 1$. 

\par

For notation convenience, we shall call
\begin{equation}\label{def Ln norm 1}
	[F]_{n,\bar{\epsilon}} := \inf \{C_1 \in [0, \infty) \suchthat \eqref{H2} \text{ hods} \}.
\end{equation}
Similarly, we measure the oscillation of the source function $f\colon Q_1 \to \mathbb{R}$ around $0$ by
\begin{equation}\label{def Ln norm 2}
	[f]_{n,\gamma}:= \inf \{K \in [0, \infty) \suchthat \intav{Q_r}   | f(X)  |^n dX \le K^n \cdot  r^{n \gamma} \}.
\end{equation}
When $[f]_{n,\gamma} < +\infty$, we say $f$ is $C^{0,\gamma}$ continuous at $0$ in the $L^n$ sense. 

\par

We recall that under $C^{0, \bar{\epsilon}}$ continuity of the coefficients (hypothesis \eqref{H2}) and $C^{2,\alpha_F}$ \textit{a priori} estimates, $0 < \bar{\epsilon} < \alpha_F$, for $F(0,D^2h) = 0$ (hypothesis \eqref{H1}), Luis Caffarelli in \cite{C1} shows that solutions to  the variable coefficients equation, $F(X, D^2\xi) = 0$ is $C^{2,\bar{\epsilon}}$, with appropriate \textit{a priori} estimates. In a pararel to the classical, linear Schauder regularity theory, the exponent $\bar{\epsilon}$ in Caffarelli's $C^{2}$ estimates cannot be surpassed, in general. 

\par

Before continuing, we set that throughout this paper, any given constant $\kappa$ that depends only upon dimension $d$, ellipticity $(\lambda, \Lambda)$, $[F]_{n,\bar{\epsilon}}$,  $[f]_{n,\gamma}$ and the $C^{2,\alpha_F}$ regularity estimates for the constant coefficient equation, i.e., the constant $\Theta$ in hypothesis \eqref{H1} will be called {\it universal}. We now state the general regularity Theorem at Hessian degenerate points of this present manuscript.

\par

\begin{theorem}\label{main} Let $u\in C(Q_1)$ be a viscosity solution to $F(X, D^2u) = f(X)$ in $Q_1$. Assume $F \colon Q_1 \times \mathrm{Sym}(d) \to \mathbb{R}$ satisfy conditions \eqref{H0}, \eqref{H1} and \eqref{H2} and that $0 \in \mathscr{H}(u)$. Assume further that $f$ is $C^{0,\gamma}$ continuous at $0$ in the $L^n$ sense, for some $ 0 < \gamma < 1$. Then $u$ is $C^{2, \min\{\alpha_F^{-}, \gamma\}}$ at the origin, i.e., for any $\beta \in (0,\alpha_F) \cap (0, \gamma]$, 
$$
	|u(X) - [u(0) + \nabla u(0) \cdot X]| \le C_\beta |X|^{2+\beta},
$$
for a constant $C_\beta >0$ that depends only on $\beta$, $|f(0)|$, and universal parameters. 

\end{theorem}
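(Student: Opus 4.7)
My strategy is a Caffarelli-style iteration around the degenerate point $0 \in \mathscr{H}(u)$, in which the approximating polynomial at each scale is kept \emph{affine} (rather than quadratic), with the Hessian degeneracy forcing the ``would-be quadratic correction'' at each step to decay at a good rate. After a preliminary normalization --- translating and subtracting the affine Taylor piece to reduce to $u(0) = 0$ and $\nabla u(0) = 0$, and rescaling so that $\|u\|_{L^\infty(Q_1)}$, $|f(0)|$, $[F]_{n,\bar\epsilon}$, and $[f]_{n,\gamma}$ are all below a small universal $\eta$ to be chosen --- the argument rests on an approximation lemma and an iterative improvement of flatness.

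The approximation lemma, in the spirit of \cite{C1}, would be obtained by Krylov--Safonov compactness together with stability of viscosity solutions: for every $\delta > 0$ there is $\eta(\delta) > 0$ such that any $v \in C(Q_1)$ with $|v| \le 1$ solving an equation of the same structural type with $\eta$-small data is $\delta$-close in $L^\infty(Q_{1/2})$ to a solution $h$ of the frozen-coefficient equation $F(0, D^2 h) = g(0)$. Combining this with \eqref{H1}, fix $\beta \in (0, \alpha_F) \cap (0, \gamma]$ and choose $\rho \in (0, 1/2)$ with $\Theta \rho^{\alpha_F - \beta} \le \tfrac14$, then $\eta$ small depending on $\rho$. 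The key structural observation is that the Hessian-degeneracy condition $D^2 u(0) = 0$ is preserved by the rescaling used in the iteration: in the rescaled variable $v_k$ one has $v_k(0) = \nabla v_k(0) = D^2 v_k(0) = 0$. Through a separate compactness argument this propagates to the frozen approximant $h$, forcing $h(0) = \nabla h(0) = D^2 h(0) = 0$; the quadratic part of the Taylor expansion of $h$ thus vanishes, and one obtains an affine polynomial $L$ with $\sup_{Q_\rho}|u - L| \le \rho^{2+\beta}$ and controlled coefficients.

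This one-step flatness improvement is then iterated: inductively construct affine polynomials $L_k(X) = a_k + b_k \cdot X$ with $\sup_{Q_{\rho^k}}|u - L_k| \le \rho^{k(2+\beta)}$ and $|a_{k+1} - a_k| + \rho^k|b_{k+1} - b_k| \le C\rho^{k(2+\beta)}$. The inductive step rescales $v_k(Y) := \rho^{-k(2+\beta)}(u(\rho^k Y) - L_k(\rho^k Y))$, which solves $G_k(Y, D^2 v_k) = g_k(Y)$ of the same structural type; hypotheses \eqref{H2} and \eqref{def Ln norm 2}, together with the propagated Hessian degeneracy, guarantee that the rescaled data remain below the smallness threshold of the approximation lemma uniformly in $k$. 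Summing the telescoping estimates yields $a_k \to u(0) = 0$ and $b_k \to \nabla u(0) = 0$, and a standard dyadic interpolation promotes the scale-by-scale estimate to the pointwise bound $|u(X) - (u(0) + \nabla u(0) \cdot X)| \le C_\beta|X|^{2+\beta}$.

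The main obstacle I anticipate is ensuring uniform smallness of the rescaled data through the iteration. A naive computation of the rescaled operator's oscillation reveals a ``constant in $N$'' component of order $\rho^{-k\beta}\beta_F(\rho^k \cdot)$ whose $L^n$ norm on $Q_1$ scales as $\rho^{k(\bar\epsilon - \beta)}$ --- blowing up when $\beta > \bar\epsilon$ --- and a similar issue for the rescaled source coming from $f$. The Hessian degeneracy is precisely what allows one to bypass this apparent barrier: by keeping the approximating polynomial affine and letting the degeneracy propagate through the compactness step, the ``would-be quadratic correction'' $M_k$ entering the multiplicative factor $(1 + \|M_k\|)$ in the oscillation bound stays of order $\rho^{k\beta}$ or smaller, so the effective rescaled oscillation is controlled and the iteration closes despite $\beta > \bar\epsilon$. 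Making this feedback loop quantitative at each scale, and hence verifying that the approximation lemma's smallness hypotheses are preserved uniformly in $k$, is the technical heart of the argument.
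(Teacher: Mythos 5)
Your outline is essentially the paper's own proof: your approximation lemma is Lemma \ref{comp} (a contradiction/compactness argument in which Caffarelli's pointwise $C^{2,\bar{\epsilon}}$ estimate transfers $|D^2u_j(0)|=o(1)$ to the limit and forces $D^2h(0)=0$; only this, not $h(0)=\nabla h(0)=0$, is needed), your one-step improvement with an affine approximant and exponent $2+\beta$, $\beta<\alpha_F$, is Lemma \ref{key}, and your iteration of affine polynomials with the rescalings $v_k$ and the telescoping sum matches the paper's final subsection, with your $\rho$ playing the role of the paper's $\theta$.

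However, the step you explicitly leave open --- uniform smallness of the rescaled data, which you yourself call the technical heart --- is a genuine gap, and the mechanism you propose for closing it is not the right one. Writing $F_k(X,M)=\theta^{-k\beta}F(\theta^kX,\theta^{k\beta}M)$, the coefficient oscillation is bounded by $\beta_F(\theta^kX)\,(\theta^{-k\beta}+\|M\|)$, so the dangerous piece is exactly the $M$-independent term $\theta^{-k\beta}\beta_F(\theta^kX)$ that you computed; it originates in the ``$1$'' of the normalization $1+\|N\|$ in the definition of $\beta_F$ and is completely insensitive to how small the would-be quadratic correction $M_k$ is, so keeping $\|M_k\|\lesssim\theta^{k\beta}$ does nothing for it. The Hessian degeneracy acts only inside the compactness step (it makes the constant-coefficient approximant second-order flat at the origin, which is what upgrades the exponent from $\bar{\epsilon}$ to $\alpha_F^{-}$); it cannot shrink $\beta_F$ itself. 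What actually keeps \eqref{lemma key eq1} stable along the iteration is a direct check: $|D^2v_k(0)|=0$ identically; $[f_k]_{n,\gamma}\le\theta^{k(\gamma-\beta)}[f]_{n,\gamma}$, which is precisely where the restriction $\beta\le\gamma$ enters (so, contrary to your remark, there is no issue with the source); and for the coefficients one needs the $X$-oscillation of $F$ to vanish at $M=0$ at a linear rate in $\|M\|$, as in the linear case $|F(X,M)-F(0,M)|\le\|a(X)-a(0)\|\,\|M\|$, which yields the improving bound $[F_k]_{n,\bar{\epsilon}}\lesssim\theta^{k\bar{\epsilon}}[F]_{n,\bar{\epsilon}}$ --- this is the situation of Theorem \ref{main linear} and is what underlies the paper's one-line verification that $(v_k,F_k,f_k)$ satisfy the hypotheses of Lemma \ref{key}. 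Without some such structure in the $M\to0$ behavior of the oscillation, the bound $\theta^{k(\bar{\epsilon}-\beta)}$ you derived shows the iteration does not close for $\beta>\bar{\epsilon}$; so if you keep the argument at the level of \eqref{H0}--\eqref{H2} alone, you must supply this verification explicitly rather than appeal to the smallness of $M_k$.
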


\par

We highlight once more that the key information provided in Theorem \ref{main} is that at Hessian degenerate points, solutions disregard the rough continuity of the media (coefficients), and its regularity theory is asymptotically as strong as the constant coefficient one. Within the classical regularity theory for 2nd order elliptic PDEs, this is at the very least a surprising result. 

\par

Notice, furthermore, that since harmonic functions are of class $C^{2,1}$, indeed it follows as a Corollary of Theorem \ref{main} that solutions to a linear, uniform elliptic  homogeneous equation
$$
	a_{ij}(X) D_{ij} u = 0,
$$
with $C^{0,\bar{\epsilon}}$ coefficients are of class $C^{2,1^{-}}$ at Hessian degenerate points. The very same $C^{2, 1^{-}}$ regularity at Hessian degenerate points holds true for fully nonlinear equation governed by an operator $F \in C^{1,\bar{\epsilon}}(Q_1, \times \mathrm{Sym}(d))$ that is convex or concave w.r.t. the matrix variable $M$.

\par

A careful analysis of the proof of Theorem \ref{main}, to be delivered in the next section, when projected to the linear setting, reveals that the $C^{2,1^{-}}$ regularity at Hessian degenerate points holds as soon as the coefficients are ``continuous enough" as to allow {\it a priori} $C^{2}$ estimates for $\L$-harmonic functions. A classical example of such condition is the so called Dini continuity of the coefficients. Recall a function $\phi$ is said to be Dini continuous at $0$ if 
$$
	\int_0^1 \frac{\sup_{B_t} |\phi(X) - \phi(0)|}{t} dt < \infty. 
$$  
In particular any $C^{0, {\epsilon}}$ H\"older continuous function  is  Dini continuous.  We state the conclusion of the above discussion as a Theorem.

\par

\begin{theorem}\label{main linear} Let $u\in C(Q_1)$ be a viscosity solution to $a_{ij}(X) D_{ij} u = 0$ in $Q_1$, where $a_{ij}$ is uniform elliptic and Dini continuous matrix. Then $u$ is $C^{2, 1^{-}}$ at any Hessian degenerate point, $Y \in \mathscr{H}(u) \cap B_{1/2}$. That is,  for any $\alpha < 1$ 
$$
	|u(X) - [u(Y) + \nabla u(Y) \cdot (X-Y)]| \le C_\alpha |X-Y|^{2+\alpha},
$$
for a constant $C_\alpha >0$ that depends only on $(1-\alpha)$,  dimension, ellipticity constants and Dini continuity of the coefficients. 

\end{theorem}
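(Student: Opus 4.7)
The plan is to adapt the proof strategy of Theorem~\ref{main} to the linear, Dini-continuous setting; the scheme is the classical ``freeze coefficients and iterate'' comparison, with the crucial observation that at a point $Y \in \mathscr{H}(u)$ the quadratic jet vanishes, so the iteration only has to track the affine profile and never accumulates a Hessian part. Without loss of generality I may translate so that $Y=0$, $u(0)=0$, $\nabla u(0)=0$; the target is then to show that for each $\alpha<1$, $|u(X)| \le C_\alpha|X|^{2+\alpha}$ on a ball around $0$.

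First I would prove a quantitative approximation lemma: for every $\delta>0$ there is $\eta>0$ so that if $\|u\|_{L^\infty(Q_1)}\le 1$, $a_{ij}$ is uniformly elliptic, and $\sup_{Q_1}|a_{ij}(X)-a_{ij}(0)|\le\eta$, then there exists $h$ solving the constant-coefficient equation $a_{ij}(0)D_{ij}h=0$ in $Q_{3/4}$ with $\|u-h\|_{L^\infty(Q_{1/2})}\le\delta$. This is the standard viscosity-solution stability (compactness-contradiction) argument, using uniform ellipticity and interior H\"older bounds from the Krylov--Safonov theory. The point of the linear setting is that the limiting equation $a_{ij}(0)D_{ij}h=0$ is merely a harmonic equation in transformed coordinates, hence $h$ enjoys $C^{2,1}$ interior estimates.

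Next, fix $\alpha\in (0,1)$ and choose a small radius $\rho\in(0,1/2)$ so that the $C^{2,1}$ model error of the frozen operator beats the target decay: using $\|h-[h(0)+\nabla h(0)\cdot X+\tfrac12 X^\top D^2h(0) X]\|_{L^\infty(Q_\rho)}\le C\rho^3\|h\|_{L^\infty(Q_{1/2})}$, pick $\rho$ so that $C\rho^3 \le \tfrac{1}{2}\rho^{2+\alpha}$. The iterative statement I would propagate by induction on $k\ge 0$ is: there exist affine functions $\ell_k(X)=a_k+b_k\cdot X$ and symmetric matrices $M_k$ with
\[
\|u-\ell_k-\tfrac12\langle M_k X,X\rangle\|_{L^\infty(Q_{\rho^k})}\le \rho^{k(2+\alpha)},\qquad \|M_k\|\le \sigma_k,
\]
where $\sigma_k$ is the Dini tail $\sigma_k:=C\sum_{j\ge k}\omega(\rho^j)$, with $\omega(r):=\sup_{Q_r}|a_{ij}(X)-a_{ij}(0)|$. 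The inductive step is carried out by rescaling $v_k(X):=\rho^{-k(2+\alpha)}[u(\rho^k X)-\ell_k(\rho^k X)-\tfrac12\rho^{2k}\langle M_k X,X\rangle]$; this solves a uniformly elliptic equation whose coefficient oscillation is bounded by $\omega(\rho^k)$ (plus a forcing of comparable size produced by the $M_k$ term), the approximation lemma yields an $h_k$ solving the frozen-coefficient equation close to $v_k$, and the $C^{2,1}$ model estimate for $h_k$ lets one read off the next $\ell_{k+1}$ and $M_{k+1}$.

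The main obstacle, and the reason Dini continuity enters precisely in this form, is closing the iteration when $\omega(r)$ decays only Dini-fashion rather than as a power. The approximation lemma must be quantified so that the error at step $k$ is controlled by $\omega(\rho^k)$ linearly, and the perturbations $\ell_{k+1}-\ell_k$ and $M_{k+1}-M_k$ must be summable. This is exactly what Dini integrability gives: $\sum_k \omega(\rho^k)<\infty$ is equivalent to $\int_0^1\omega(t)\,dt/t<\infty$, so $\sigma_k\to 0$, the sequences $\ell_k$ converge, and the excess is absorbed by the slack $\rho^{k(1-\alpha)}$ built in by choosing $\alpha<1$. Passing to an arbitrary radius $r\in(\rho^{k+1},\rho^k]$ then converts the inductive bound into $|u(X)|\le C_\alpha|X|^{2+\alpha}$; the constant $C_\alpha$ inevitably degenerates as $\alpha\uparrow 1$, consistent with the $C^{2,1^{-}}$ statement. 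Finally, the assertion at an arbitrary $Y\in\mathscr{H}(u)\cap B_{1/2}$ follows by translating the origin to $Y$ and noting that the Dini modulus of $a_{ij}$ is uniform across such translations.
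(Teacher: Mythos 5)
Your scaffolding (freeze coefficients, compactness approximation, geometric iteration, Dini summability) is the right family of tools, but the scheme you set up has a genuine gap, and it sits exactly where the hypothesis $Y\in\mathscr{H}(u)$ must do its work. The paper's approximation lemma (Lemma \ref{comp}) is not the standard stability statement you invoke: it uses the smallness of $|D^2u(0)|$ together with interior $C^{2}$ estimates for the \emph{variable}-coefficient equation (Caffarelli's $C^{2,\bar\epsilon}$ estimates in the H\"older case, their Dini analogue here) to force the limit of the contradicting sequence to satisfy $D^2u_\infty(0)=0$, so the approximating frozen-coefficient solution $h$ can be taken with $D^2h(0)=0$. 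As a result, the iteration (Lemma \ref{key} plus the flatness-improvement induction) subtracts \emph{only affine} functions $\ell_k$; the rescaled functions $v_k$ then still satisfy $D^2v_k(0)=0$, the smallness hypothesis of the approximation lemma is reproduced for free at every step, no quadratic corrector is ever created, and $\sup_{Q_{\theta^k}}|u-\ell_k|\le\theta^{k(2+\beta)}$ converts directly into the pointwise estimate. Your opening paragraph states this principle (``never accumulates a Hessian part''), but your actual induction contradicts it by carrying matrices $M_k$.

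Concretely, the $M_k$'s with $\|M_k\|\le\sigma_k:=C\sum_{j\ge k}\omega(\rho^j)$ break the argument in two places. First, your inductive bound only yields $|u-\ell_k|\lesssim\rho^{k(2+\alpha)}+\sigma_k\rho^{2k}$ on $Q_{\rho^k}$, and Dini continuity gives no rate for $\sigma_k$: for $\omega(r)=|\log r|^{-2}$ one has $\sigma_k\sim 1/k$, so $\sigma_k\rho^{2k}$ is not $O(\rho^{k(2+\alpha)})$ and the final conversion to $|u(X)|\le C_\alpha|X|^{2+\alpha}$ fails. Second, the quadratic subtraction creates in the equation for $v_k$ the forcing $-\rho^{-k\alpha}\bigl[a_{ij}(\rho^kX)-a_{ij}(0)\bigr](M_k)_{ij}$ (even after arranging $a_{ij}(0)(M_k)_{ij}=0$), whose size $\rho^{-k\alpha}\,\omega(\rho^k)\,\sigma_k$ is unbounded along the iteration for general Dini moduli (again $\omega(r)=|\log r|^{-2}$), and even for H\"older moduli $\omega(r)=r^{\bar\epsilon}$ it blows up once $\alpha>2\bar\epsilon$; so the improvement-of-flatness step cannot be closed for $\alpha$ close to $1$, which is precisely the range the theorem is about. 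The repair is the paper's mechanism: prove the Hessian-degenerate approximation first (this is where $D^2u(Y)=0$ and the Dini-based interior $C^{2}$ theory enter), take $\ell_0$ to be the first-order jet of that $h$, fix $\theta$ from the $C^{2,1}$ estimate for the frozen constant-coefficient (affinely harmonic) equation via $\delta+\Theta\theta^{3}\le\theta^{2+\alpha}$, and iterate with affine functions only, exactly as in Lemma \ref{key} and the proof of Theorem \ref{main} with $\alpha_F=1$.
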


\par

We conclude this introduction by mentioning that the statement of Theorem \ref{main linear} as well as the insights for the proof of Theorem \ref{main} have their essential roots in the affluent theory of geometric free boundary problems, see \cite{CS} and also \cite[Chapter  5]{F}.   Of particular interest are the classes of {\it pseudo} free boundary problems of the form
$$
	\max \left \{ \L\phi - \phi^{\mu}, -\phi \right\} = 0, \quad \mu >0,
$$
for some nonhomogeneous elliptic operator $\L$. The limiting case $\mu = 0$ represents the classical obstacle problem, already mentioned above. For $\mu > 0$, solutions are of class $C^2$ across the free interface $\partial \{\phi > 0\}$. Thus, all free boundary points are Hessian degenerate. In accordance to Theorem \ref{main linear}, $\phi \in C^{2,1^{-}}$ at free boundary points, neglecting the eventual rough continuity of the coefficients of $\L$.

\section{Proof}


\subsection{Hessian degenerate approximations}
In this first part of the proof we establish a primary compactness result. It states that if the coefficients are nearly constant, the source is nearly zero and the Hessian of the solution at 0 is tiny enough, then one can find an $\mathscr{F}$-harmonic function close to $u$, for which $0$ is a Hessian degenerate point. 

\begin{lemma}\label{comp} Let $u\in C(Q_1)$ be a viscosity solution to $F(X, D^2u) = f(X)$ in $Q_1$ and $|u|\le 1$. Assume $F \colon Q_1 \times \mathrm{Sym}(d) \to \mathbb{R}$ satisfies conditions \eqref{H0}, \eqref{H1}, \eqref{H2} and $f(0) = 0$. Given $\delta > 0$, there exists an $\varepsilon$, depending only on $\delta$ and universal parameters, such that if 
$$
	[F]_{n,\bar{\epsilon}} + [f]_{n,\gamma} + |D^2u(0)| \le \varepsilon,
$$
then we can find a function $h\in C(Q_{1/2})$, satisfying
$$
	\mathscr{F}(D^2h) = 0 \text{ in } Q_{1/2} \quad \text{ and } \quad D^2h(0) = 0,
$$
where $\mathscr{F} \colon \mathrm{Sym}(d) \to \mathbb{R} $ is under the conditions \eqref{H0} and \eqref{H1} and
$$
	\sup\limits_{Q_{1/2}} |u - h| \le \delta.
$$
\end{lemma}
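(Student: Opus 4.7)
The natural route is a compactness argument by contradiction. Assume the conclusion fails: for some $\delta_0 > 0$ there exist sequences of operators $\{F_k\}$ satisfying \eqref{H0}--\eqref{H2}, sources $\{f_k\}$ with $f_k(0)=0$, and viscosity solutions $\{u_k\}\subset C(Q_1)$ with $|u_k|\le 1$ solving $F_k(X,D^2 u_k)=f_k$, such that $[F_k]_{n,\bar{\epsilon}} + [f_k]_{n,\gamma} + |D^2u_k(0)| \le 1/k$, yet no admissible $h$ approximates $u_k$ within $\delta_0$ on $Q_{1/2}$. The plan is to build the desired $h$ as a limit of $\{u_k\}$ and derive a contradiction.

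Caffarelli's interior $C^{2,\bar{\epsilon}}$ theory \cite{C1,CC} applied to each $u_k$ produces a uniform bound $\|u_k\|_{C^{2,\bar{\epsilon}}(Q_{3/4})}\le C$, whence Arzel\`a--Ascoli extracts a subsequence with $u_k \to u_\infty$ in $C^2_{\mathrm{loc}}(Q_{3/4})$. The $C^2$ strength of this convergence is the decisive feature: it permits passing to the limit in the Hessian at the origin, giving $D^2u_\infty(0)= \lim_k D^2u_k(0)=0$. Simultaneously, by uniform ellipticity the frozen constant-coefficient operators $F_k(0,\cdot)$ are equi-Lipschitz on $\mathrm{Sym}(d)$, so along a further subsequence they converge locally uniformly to some $\mathscr{F}\colon\mathrm{Sym}(d)\to\mathbb{R}$ inheriting \eqref{H0}. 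A short stability argument yields \eqref{H1} for $\mathscr{F}$ as well: any $\mathscr{F}$-harmonic function with prescribed boundary data is a uniform limit of $F_k(0,\cdot)$-harmonic functions, each obeying \eqref{H1} with constant $\Theta$, and the $C^{2,\alpha_F}$ bound persists in the uniform limit.

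To identify the limit equation, note that $[F_k]_{n,\bar{\epsilon}}\to 0$ forces $F_k(\cdot,M)$ and $F_k(0,M)$ to agree in the $L^n$-averaged sense, while $[f_k]_{n,\gamma}\to 0$ pushes $f_k\to 0$ likewise. Standard viscosity-solution stability then yields $\mathscr{F}(D^2 u_\infty) = 0$ in $Q_{3/4}$. Hence $h := u_\infty|_{Q_{1/2}}$ meets every requirement of the lemma, and for $k$ large $\|u_k - h\|_{L^\infty(Q_{1/2})}<\delta_0$, contradicting the standing hypothesis. The main technical hurdle I anticipate is ensuring the limit operator $\mathscr{F}$ inherits \eqref{H1} with the same universal constant $\Theta$, so that $h$ qualifies as an admissible comparison function, together with the careful passage to the viscosity limit under only the $L^n$-averaged control of the coefficients; the $C^2_{\mathrm{loc}}$ compactness, which captures the Hessian at the origin, is the feature that ultimately makes the whole scheme close.
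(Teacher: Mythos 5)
Your overall scheme---argue by contradiction, extract limits of the solutions and of the operators, identify a constant-coefficient limit equation, and check that the limit function has a degenerate Hessian at the origin---is exactly the paper's argument. The genuine gap lies in the mechanism you single out as decisive: the uniform bound $\|u_k\|_{C^{2,\bar{\epsilon}}(Q_{3/4})}\le C$ and the ensuing $C^2_{\mathrm{loc}}$ convergence. Hypothesis \eqref{H2} controls the oscillation of the coefficients only around the base point $0$: the quantity $\beta_F$ compares $F(X,\cdot)$ with $F(0,\cdot)$ and is averaged over cubes centered at the origin. Away from the origin the coefficients are merely bounded measurable, so Caffarelli's theory yields a \emph{pointwise} $C^{2,\bar{\epsilon}}$ estimate at $0$, not interior $C^{2,\bar{\epsilon}}$ (or even $C^2$) bounds on $Q_{3/4}$. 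Consequently you cannot pass to the limit in the Hessian by writing $D^2u_\infty(0)=\lim_k D^2u_k(0)$ via $C^2_{\mathrm{loc}}$ convergence; as stated, that step fails under the hypotheses of the lemma.

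The repair is exactly what the paper does. Compactness of $\{u_k\}$ comes from Krylov--Safonov: each $u_k$ belongs to the Pucci class with right-hand side uniformly bounded in $L^n$, hence $\{u_k\}$ is bounded in $C^{0,\alpha}_{\mathrm{loc}}$ and, up to a subsequence, converges locally uniformly to some $u_\infty$. The degeneracy of the Hessian of $u_\infty$ at $0$ is then read off from the pointwise $C^{2,\bar{\epsilon}}$ estimate at the origin, which \emph{does} hold uniformly in $k$ under \eqref{H1}--\eqref{H2} (this is also what makes $D^2u_k(0)$ meaningful in the first place): $|u_k(X)-u_k(0)-\nabla u_k(0)\cdot X-\tfrac12 X^{T}D^2u_k(0)X|\le C|X|^{2+\bar{\epsilon}}$ with universal $C$. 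Since $|D^2u_k(0)|\to 0$, and since $u_\infty$ solves the constant-coefficient limit equation and is therefore $C^{2,\alpha_F}$ near the origin by \eqref{H1}, passing to the limit in this inequality forces $D^2u_\infty(0)=0$. The remaining ingredients of your sketch---stability of \eqref{H0} and \eqref{H1} under locally uniform limits of the operators, and viscosity stability giving $\mathscr{F}(D^2u_\infty)=0$ because $[F_k]_{n,\bar{\epsilon}}+[f_k]_{n,\gamma}\to 0$---are correct and coincide with the paper's treatment, so with the substitution above your argument closes.
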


\begin{proof} Suppose, for the sake of contradiction, that the thesis of the Lemma fails. This means that there exist a sequence of functions 
$$
	u_j \in C(Q_1) \quad \text{ with} \quad |u_j| \le 1,
$$ 
a sequence of operators  
$$
	F_j \colon Q_1 \times \mathrm{Sym}(d) \to \mathbb{R} \quad \text{ satisfying conditions } \eqref{H0}, ~ \eqref{H1} \text{ and } \eqref{H2},
$$
and a sequence of sources $f_j$ with $f_j(0) = 0$, all linked by the equation
\begin{equation}\label{proof comp eq2}
	F_j(X, D^2u_j) = f_j(X),
\end{equation}
in the viscosity sense. There also holds
\begin{equation}\label{proof comp eq1}
	[F_j]_{n,\bar{\epsilon}} + [f_j]_{n,\gamma} + |D^2u_j(0)| = \text{o}(1),
\end{equation}
as $j \to \infty$. However, for a fixed $\delta_0>0$, 
\begin{equation}\label{proof comp eq3}
	\inf\limits_{h \in \mathbb{H}} \|u_j - h\|_{L^\infty(Q_{1/2})} \ge \delta_0,
\end{equation}
where
$$
\mathbb{H} := \left \{ 
	\begin{array}{c}
		h \suchthat  \mathscr{F}(D^2h) = 0 \text{ in } Q_{1/2}, \text{ for some }\mathscr{F} \text{ verifying } \eqref{H0} 
		\text{ and }  \eqref{H1}  \\
		\text{ such that } 0 \text{ belongs to } \mathscr{H}(h)
	\end{array}	
	\right \}.
$$

It follows from \eqref{proof comp eq1}, \eqref{proof comp eq2} and standard reasoning that, up to a subsequence, we can assume,
$$
	\begin{array}{ll}	
		F_j \to \mathscr{F}_\infty, & \text{ locally uniformly in } Q_1 \times \mathrm{Sym}(d) \\
		u_j \to u_\infty, & \text{ locally uniformly in } Q_1.
	\end{array}
$$
Since $[F_j]_{n,\bar{\epsilon}} = \text{o}(1)$, the limiting operator  $\mathscr{F}_\infty$   has constant coefficients. From uniform convergence,   $\mathscr{F}_\infty$ too satisfies  \eqref{H1} and \eqref{H2}. Also, by the same reasoning employed in \cite{C1}, Lemma 13, we deduce 
\begin{equation}\label{proof comp eq4}
	\mathscr{F}_\infty(D^2u_\infty) = 0
\end{equation}
in the viscosity sense. Furthermore, by Caffarelli's $C^{2,\bar{\epsilon}}$ regularity estimate, see \cite{C1} Theorem 3, and the fact that 
$|D^2u_j(0)| = \text{o}(1)$, we conclude 
\begin{equation}\label{proof comp eq5}
	D^2 u_\infty(0) = 0.
\end{equation}
We have proven that the limiting function $u_\infty$ lies in the functional space $\mathbb{H}$. Hence, we reach a contradiction on \eqref{proof comp eq3} if we take $h = u_\infty$ and $j \gg 1$. The proof of Lemma \ref{comp} is concluded.
\end{proof}

\subsection{Discrete regulariy}

Our next step is to establish a discrete version of the main Theorem. It states that if the oscillations of the data of the equation are universally small, then a step-one version of the {\it continuous} thesis of Thereom \ref{main} holds. More precisely, we have

\begin{lemma} \label{key} Let $u\in C(Q_1)$ be a viscosity solution to $F(X, D^2u) = f(X)$ in $Q_1$ and $|u|\le 1$. Assume $F \colon Q_1 \times \mathrm{Sym}(d) \to \mathbb{R}$ satisfies conditions \eqref{H0}, \eqref{H1}, \eqref{H2} and $f(0) = 0$. Given $0<\beta <\alpha_F$, there exists an $\varepsilon > 0$, that depends only on $\beta$ and universal parameters such that if 
\begin{equation}\label{lemma key eq1}
	[F]_{n,\bar{\epsilon}} + [f]_{n,\gamma} + |D^2u(0)|  \le \varepsilon,
\end{equation}
then there exist numbers $0 < \theta \ll 1 \ll C < \infty$, depending only on $\beta$ and universal parameters, and an affine function $\ell_0(X) = a_0 + \vec{b}_0 \cdot X$,  with universally bounded coefficients, 
$$
	|a_0|_\mathbb{R} + |\vec{b}_0|_{\mathbb{R}^d} \le C
$$
such that the following control is granted 
\begin{equation}\label{key eq2}
	\sup\limits_{Q_\theta} | u(X) - \ell_0(X)| \le \theta^{2+\beta}.
\end{equation}
\end{lemma}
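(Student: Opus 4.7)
The plan is a classical approximation-and-iteration setup, using the compactness statement of Lemma \ref{comp} to transfer regularity from the limiting homogeneous equation. First, fix $\beta \in (0,\alpha_F)$. By hypothesis \eqref{H1}, any $\mathscr{F}$-harmonic function $h$ in $Q_{1/2}$ with $|h|\le 2$ satisfies $\|h\|_{C^{2,\alpha_F}(Q_{1/4})} \le 2\Theta$, so if additionally $D^2h(0)=0$ then the affine approximant $\ell_h(X):=h(0)+\nabla h(0)\cdot X$ obeys
\[
\sup_{Q_\theta}|h(X)-\ell_h(X)|\le C_\star \,\theta^{2+\alpha_F},
\]
for a universal constant $C_\star$, and moreover $|h(0)|+|\nabla h(0)|\le C_\star$.

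Next I would pick the radius $\theta$. Since $\beta<\alpha_F$, I choose $\theta=\theta(\beta)\in(0,1/2)$ so small that
\[
C_\star \,\theta^{2+\alpha_F}\le \tfrac{1}{2}\,\theta^{2+\beta}.
\]
With this $\theta$ fixed, I then set $\delta:=\tfrac{1}{2}\theta^{2+\beta}$ and invoke Lemma \ref{comp} with this $\delta$ to produce the corresponding $\varepsilon=\varepsilon(\delta)$. Since $\delta$ depends only on $\beta$ and universal parameters, so does $\varepsilon$.

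Now assume the smallness condition \eqref{lemma key eq1} with this $\varepsilon$. Lemma \ref{comp} yields $h\in \mathbb{H}$ with $\sup_{Q_{1/2}}|u-h|\le\delta$, which by the triangle inequality also gives $|h|\le 2$, making the $C^{2,\alpha_F}$ bound above applicable. Define $\ell_0:=\ell_h$; then the coefficient bound $|a_0|+|\vec{b}_0|\le C_\star$ is automatic, and
\[
\sup_{Q_\theta}|u-\ell_0|\le \sup_{Q_\theta}|u-h|+\sup_{Q_\theta}|h-\ell_h|\le \delta + C_\star \theta^{2+\alpha_F}\le \theta^{2+\beta},
\]
which is precisely \eqref{key eq2}.

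The only delicate point is the order of quantifiers: $\theta$ must be selected before $\delta$ (and hence before $\varepsilon$), because the approximation error $\delta$ has to be measured against the geometric scale $\theta^{2+\beta}$ that we wish to beat. Once that ordering is respected, every constant produced depends only on $\beta$ and universal quantities, as required. Note that the hypothesis $\beta<\alpha_F$ is used in a strict way: it provides the slack $\alpha_F-\beta>0$ needed to absorb the approximation error $\delta$ into the target $\theta^{2+\beta}$ at a fixed scale $\theta$.
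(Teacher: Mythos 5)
Your proposal is correct and follows essentially the same route as the paper: approximate $u$ by the $\mathscr{F}$-harmonic function $h$ from Lemma \ref{comp} with $D^2h(0)=0$, use the $C^{2,\alpha_F}$ estimate to bound $|h-\ell_h|$ by $C\theta^{2+\alpha_F}$ on $Q_\theta$, and fix $\theta$ first (exploiting $\beta<\alpha_F$) and then $\delta\sim\theta^{2+\beta}$, exactly the quantifier ordering the paper encodes in its explicit choices \eqref{proof key eq2}--\eqref{proof key eq2.1}. The only cosmetic difference is your slightly more careful bookkeeping of the interior domain ($Q_{1/4}$) and of $\|h\|_{L^\infty}\le 2$, which the paper leaves implicit.
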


\begin{proof} For a small number $\delta > 0$ to be chosen later, let $h$ be the function sponsored by Lemma \ref{comp}, that is within a $\delta$ distance from $u$ in the $L^\infty$ topology. That is, $h$ solves
\begin{equation}\label{proof key eq0}
	\mathscr{F}(D^2h) = 0 \text{ in } Q_{1/2},
\end{equation} 
for some constant coefficient operator $\mathscr{F}$ which satisfies \eqref{H0} and \eqref{H1} and also $D^2h(0) = 0$ is verified. We notice that within the scope of From Lemma \ref{comp},  once we adjust $\delta>0$  by a universal decision, the choice of $\varepsilon>0$ in the statement of this present Lemma shall also be universal. 

\par

It follows from the $C^{2,\alpha_F}$ regularity theory available for the operator $\mathscr{F}$ that
\begin{equation}\label{proof key eq00}
	\ |h(X) - [h(0) + \nabla h(0) \cdot X]| \le \Theta |X|^{2+\alpha_F},
\end{equation} 
where $\Theta >0$ is the universal constant from \eqref{H1}.  In the sequel we elect $\theta$ and $\delta$ as
\begin{eqnarray}	
	\theta &:=& \sqrt[\alpha_F - \beta]{\dfrac{1}{2\Theta}} \label{proof key eq2}\\
	 \delta &:=& \dfrac{1}{2} \left ( \dfrac{1}{2\Theta}\right )^{\frac{2+\beta}{\alpha_F - \beta}}. \label{proof key eq2.1}
\end{eqnarray}
The selections above depend only on $\beta$ and universal parameters. We further take 
$$
	\ell_0(X) := h(0) + \nabla h(0) \cdot X.
$$ 
Finally  we estimate
\begin{equation}\label{proof key eq1}
	\begin{array}{lll}
		\displaystyle\sup\limits_{Q_\theta} |u(X) - [h(0) + \nabla h(0) \cdot X]| &\le& \displaystyle\sup\limits_{Q_\theta} |u(X) - h(X) | \\
		&+& \sup\limits_{Q_\theta}  |h(X) - [h(0) + \nabla h(0) \cdot X]| \\
		&\le & \delta + \Theta \cdot \theta^{2+\alpha_F} \\
		&=& \theta^{2+\beta},
	\end{array}
\end{equation}
and the proof of Lemma \ref{key} is complete.
\end{proof}

\subsection{Iterative flatness improvement} 

We have now gathered all the elements we need to conclude the proof of Theorem \ref{main}. Let us fix a number $\beta \in (0, \alpha_F) \cap (0, \gamma]$. By normalization, expanding variables and translating the equation, if necessary, we can start off the proof by assuming, with no loss of generality, that 
\begin{equation}\label{proof main eq0}
	f(0) = 0, \quad |u|\le 1, \quad [F]_{n,\bar{\epsilon}} + [f]_{n, \gamma} \le \varepsilon,
\end{equation}
where $\varepsilon>0$ is the universal number found in Lemma \ref{key}, for our fixed choice of $\beta$. 
Recall it is part of the key hypothesis of Theorem \ref{main} that
\begin{equation}\label{proof main eq00}
	D^2u(0) = 0.
\end{equation}

Our strategy is to show the existence of a sequence of affine functions $\ell_k = a_k + b_k X$, satisfying
\begin{equation}\label{proof main eq1}
	|a_k - a_{k-1}|_{\mathbb{R}} + \theta^k |\vec{b}_k - \vec{b}_{k-1}|_{\mathbb{R}^d} \le C \cdot \theta^{k(2+\beta)}
\end{equation}
and
\begin{equation}\label{proof main eq2}
	\sup\limits_{B_{\theta^k}} |u(X) - \ell_k(X) |\le \theta^{k(2+\beta)}.
\end{equation}

We establish \eqref{proof main eq1} and \eqref{proof main eq2} by an induction argument. The step $k=1$ is precisely the statement of Lemma \ref{key}. Suppose we have verified the $k$th step of induction. Define the normalized function
\begin{equation}\label{proof main eq3}
	v_k(X) := \frac{u(\theta^kX) - \ell_k(\theta^kX)}{\theta^{k(2+\beta)}},
\end{equation}
the fully nonlinear operator
\begin{equation}\label{proof main eq4}
	F_k(X, M) := \dfrac{1}{\theta^{k\beta}}F(\theta^{k}X, \theta^{k\beta}M),
\end{equation}
and the source function
\begin{equation}\label{proof main eq5}
	f_k(X) := \dfrac{1}{\theta^{k\beta}}f(\theta^{k}X).
\end{equation}
Easily one verifies that
\begin{equation}\label{proof main eq6}
	F_k(X, D^2v) = f_k(X)
\end{equation}
in the viscosity sense and that $v$, $F_k$ and $f_k$ satisfy all the hypotheses from Lemma \ref{key}. Thus, it follows from the thesis of that Lemma the existence of an affine function $\ell_\star(X) = a_\star + \vec{b}_\star \cdot X$, with 
\begin{equation}\label{proof main eq7}
	|a_\star|_\mathbb{R} + |b_\star|_{\mathbb{R}^d}  \le C, 
\end{equation}
such that
\begin{equation}\label{proof main eq8}
	\sup\limits_{Q_{\theta}} |v(X) - \ell_\star(X)| \le \theta^{2+\beta}.
\end{equation}
Finally if we define
\begin{equation}\label{proof main eq9}
	\ell_{k+1}(X) := \ell_k(X) + \theta^{k(2+\beta)} \ell_\star(\theta^{-k}X),
\end{equation}
and translate back estimate \eqref{proof main eq8} to the original function $u$, we obtain the $(k+1)$th step of the induction process. Furthermore, estimate \eqref{proof main eq1} follows readily from \eqref{proof main eq7} and \eqref{proof main eq9}.

It is now a mere of routine to conclude the proof of Theorem \ref{main}, see for instance \cite{T1}.  We omit the details here.  \hfill $\square$

\bigskip

\bigskip

\noindent{\bf Ackwnoledgement.} This work has been partially supported by CNPq-Brazil and Capes-Brazil. 
\bibliographystyle{amsplain, amsalpha}

\bigskip
\bigskip

\noindent \textsc{Eduardo V. Teixeira} \\
\noindent Universidade Federal do Cear\'a \\
\noindent Departamento de Matem\'atica \\
\noindent Campus do Pici - Bloco 914, \\
\noindent Fortaleza, CE - Brazil 60.455-760 \\
 \noindent \texttt{teixeira@mat.ufc.br}

\end{document}